\documentclass[12pt]{amsart}
\usepackage{amssymb}
\usepackage{amsfonts}
\usepackage{amssymb,latexsym}
\usepackage{enumerate}
\usepackage{mathrsfs}

\usepackage{graphicx}
\usepackage[all]{xy}

\setlength{\textwidth}{132mm}
\setlength{\textheight}{186mm}
\setlength{\topmargin}{-0.5in}
\setlength{\oddsidemargin}{0mm}
\setlength{\evensidemargin}{2mm}

\newtheorem{thm}{Theorem}[section]

\newtheorem{prop}[thm]{Proposition}

\theoremstyle{definition}
\newtheorem{definition}[thm]{Definition}

\theoremstyle{remark}

\begin{document}

\title[{\normalsize {\large {\normalsize {\Large {\LARGE }}}}}angelic way for modular  Lie algebras toward Kim's conjecture
]{Angelic way for modular  Lie algebras  toward Kim's conjecture  }

\author{Kim YangGon }

\address{emeritus professor,
	 Department of Mathematics,  
	Jeonbuk National University, 567 Baekje-daero, Deokjin-gu, Jeonju-si,
	Jeollabuk-do, 54896, Republic of Korea.}

\

\

\email{ kyk1.chonbuk@hanmail.net }

\subjclass[2010]{Primary00-02,Secondary17B50,17B10}

\begin{abstract}
We consider modular Lie algebras over algebraically closed field of characteristic $p \geq 7.$
This paper purports to prove the conjecture  that  classical modular Lie algebras,in particular  of $C_l$ and of $A_l$ type,  should be a Park's Lie algebra, and so a Hypo- Lie algebra.

\end{abstract}

\maketitle

\section{introduction}

\

\

\large{

If there is  a  Lee's basis  except for a finite number of  simple modules  for a Lie algebra[4], then we would like to say that  the Lie algebra has an  angelic  way.\newline

In this paper we shall see that  modular $C_l$-type and $A_l$- type Lie algebras have angelic  ways.\newline

For this we shall proceed in the following order:
Section 2  deals with modular $A_l$- type Lie algebra and its representation, followed by $C_l$-type Lie algebra and its representation  in section 3.\newline

Finally in section 4 we shall make concluding remarks relating to Park's Lie algebra and Hypo Lie algebra.\newline

We shall assume throughout that  $F$   denotes any algebraically closed field of characteristic $p\geq 7$ unless otherwise stated.

\

\section{modular $A_l$-type Lie algebra  and its representation}

\

\
We must recall first  definitions  related to modular representation theory.

\begin{definition}
Let  $(L , [p])$ be a restricted Lie algebra over $F$  and $\chi \in L^{*}$ be a linear form. If a representation
 $\rho_\chi:$$L\longrightarrow$ $ \mathfrak{gl}(V)$   of  $(L, [p])$ satisfies  $\rho_\chi( x^p- x^{[p]})$= $\chi(x)^p id_V$ 
for any $x\in L,$ then $\rho_\chi$  is said  to be a  $\chi- representation.$\newline

 In  this case we say that the representation or the 
corresponding module has a      $p-character  \chi.$ In particular if $\chi$=0 , then $\rho_0$ is  called  a $ restricted$  representation, whereas  $\rho_\chi$ 
for $\chi \neq0$  is called a $nonrestricted$ representation .
\end{definition}

We are well aware that  we have  $\rho_\chi(a)^p -\rho_\chi ( a^{[p]})$=$\chi(a)^p id_V$ for some $\chi\in L^*$,  for any  $a\in L$  and for any irreducible  representation $\rho_\chi.$ \newline

For an algebraically closed field $F$ of prime characteristic $p,$ the $A_l$- type Lie algebra $L$ over $F$ is just the analogue over $F$ of the $A_l-$ type 
simple Lie algebra over $\mathbb C$.\newline

 In other words, the $A_l$- type Lie algebra over $F$ is isomorphic to the Chevalley Lie algebra of the form 
$\sum_{i=1}^{n}\mathbb{Z}c_i\otimes_\mathbb{Z}F,$ where $ n$= $dim_FL$ and $x_\alpha$= some $c_i$ for each $\alpha\in\Phi$ , $h_{\alpha}$= some $c_{j}$ with  $\alpha$  some base element  of  $\Phi$  for a Chevalley basis 
\{$c_{i}$\} of  the $A_{l}$ - type  Lie algebra over $\mathbb{C}$.\newline

The $A_l$-type Lie algebra  over $\mathbb C$ has its root system $\Phi $= $\{\epsilon_i- \epsilon_j | 1 \leq i \neq j \leq l+1 \}$,where $\epsilon_i$'s are orthonormal unit vectors in the Euclidean space $\mathbb R^{l+1}$.The base of  $\Phi$ is equal to $\{\epsilon_i - \epsilon_{i+1}| 1\leq i \leq l\}$.\newline

We let $L$ be an $A_l$-type simple Lie algebra over an algebraically closed field of  characteristic $p \geq 7$.

For a root $\alpha \in \Phi,$ we put $g_\alpha :=  x_\alpha^{p-1}- x_{- \alpha}$and $w_\alpha:= (h_\alpha+ 1)^2+ 4x_{-\alpha}x_\alpha$.\newline

We have seen from [4],[1] that  any $A_l$-type modular Lie algebra  over $F$ becomes a Park's  Lie algebra.However we would like to  specfy the proof  when $\chi(H) \neq 0$ for a CSA $H$ of $L$ .

 \begin{thm}
Suppose that $\chi$ is  a character  of any  simple $L$-module with $\chi(h_\alpha)\neq 0 $ for some $\alpha \in$ the base of $\Phi$,where $h_\alpha $ is an  element  in  the Chevalley basis of $L$ such that $Fx_\alpha+ Fx_{-\alpha}+F h_\alpha= \frak {sl}_2 (F) $ with   $[x_\alpha,x_{-\alpha}]= h_\alpha \in H $(a CSA of  $L)$.\newline

We then have that  the dimension of any simple $L$-module  with character $\chi= p^m= p^{(n-l) \over 2}$,where   $  n$= dim $L= 2m+ l$ for $ H $ with dim $  H= l$.
\end{thm}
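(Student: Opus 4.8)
The plan is to pass to the reduced enveloping algebra $U_\chi(L)$, which has dimension $p^{n}$, and to show that \emph{every} simple $U_\chi(L)$-module has dimension exactly $p^{m}$, where $m=(n-l)/2$. From [4] and [1] we already know that $L$ is a Park's Lie algebra, so the dimension of a simple $U_\chi(L)$-module is a power of $p$ and is bounded above by $p^{m}$; hence it suffices to establish the matching lower bound $\dim V\ge p^{m}$ for an arbitrary simple $L$-module $V$ with $p$-character $\chi$. The whole argument is organised around the $\mathfrak{sl}_2$-triple $\mathfrak{s}:=Fx_\alpha+Fh_\alpha+Fx_{-\alpha}$ attached to the base root $\alpha$ with $\chi(h_\alpha)\neq0$, together with the auxiliary elements $g_\alpha=x_\alpha^{p-1}-x_{-\alpha}$ and $w_\alpha=(h_\alpha+1)^2+4x_{-\alpha}x_\alpha$ introduced above; note that $w_\alpha$ is, up to an additive and a multiplicative constant, the Casimir element of $\mathfrak{s}$, hence central in $U(\mathfrak{s})$ though not in $U(L)$.

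First I would analyse $V$ as a module over $\mathfrak{s}$. Since $h_\alpha$ is toral we have $h_\alpha^{[p]}=h_\alpha$, so the $\chi$-relation yields the Artin--Schreier identity $h_\alpha^{p}-h_\alpha=\chi(h_\alpha)^{p}\,\mathrm{id}_V$ on $V$. The polynomial $X^{p}-X-\chi(h_\alpha)^{p}$ is separable, with $p$ distinct roots forming a coset of $\mathbb{F}_p$ in $F$; therefore $h_\alpha$ acts semisimply on $V$ with $p$ distinct eigenvalues, and since $p$ is odd and $x_{\pm\alpha}$ shift the $h_\alpha$-weight by $\pm2$, the root vectors cyclically permute the $p$ weight spaces. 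Consequently $\chi|_{\mathfrak s}$ is a regular $p$-character of $\mathfrak{s}$, every simple $U_{\chi|_{\mathfrak s}}(\mathfrak s)$-module is $p$-dimensional, and $V|_{\mathfrak s}$ is a direct sum of such modules; in particular $p\mid\dim V$. Moreover, on each $\mathfrak s$-simple summand $g_\alpha$ acts invertibly --- it intertwines the two bijective operators $x_\alpha$ and $x_{-\alpha}$ --- while $w_\alpha$ acts by a scalar, and one records these scalars for the later bookkeeping that produces a Lee's basis.

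Next I would globalise along the root system. Using the triangular decomposition $L=N^-\oplus H\oplus N^+$ and the PBW basis of $U_\chi(L)$, the bound $\dim V\ge p^{m}$ is equivalent to the statement that $V$ is free over the subalgebra generated by the positive root vectors, i.e.\ that $\chi$ restricts to a regular character of the nilradical on $V$. To propagate this from the single hypothesis $\chi(h_\alpha)\neq0$ I would use the inner automorphisms of $L$ obtained by exponentiating the regular $\mathfrak s$-action; these realise the simple reflection $s_\alpha$ and an $SL_2(F)$ worth of symmetries inside $U_\chi(L)$, which can be used to spread the regularity established in the $\alpha$-direction to the remaining positive roots, iterating along a reduced word for the longest element of the Weyl group. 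Each step contributes a further factor $p$, and after $m=|\Phi^{+}|$ steps one gets $\dim V\ge p^{m}$. A cleaner alternative is the Kac--Weisfeiler reduction: writing $\chi=\chi_s+\chi_n$ for the Jordan decomposition, the hypothesis forces $\chi_s\neq0$, so the centraliser $\mathfrak l:=L^{\chi_s}$ is a proper reductive subalgebra, the category of $U_\chi(L)$-modules is equivalent to that of $U_{\chi_n}(\mathfrak l)$ with all dimensions scaled by $p^{(\dim L-\dim \mathfrak l)/2}$, and one concludes by induction on the rank, the base case being exactly the $\mathfrak{sl}_2$ computation above; in type $A_l$ the centralisers that occur are products of smaller general linear Lie algebras, which keeps the induction transparent.

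The main obstacle is precisely this globalisation: the hypothesis only controls $\chi$ on the single coroot $h_\alpha$, whereas the conclusion is a maximal-dimension statement for all of $L$ and hence a genericity assertion about $\chi$ on the whole nilradical. Making the $SL_2(F)$-propagation rigorous --- checking that the symmetries produced from $\mathfrak s$ genuinely act on $U_\chi(L)$-modules and that no dimension drop occurs at an intermediate root --- is where the real work lies; by comparison the $\mathfrak{sl}_2$-reduction and the Artin--Schreier step are routine, and the upper bound $p^{m}$ is already available from the Park's-Lie-algebra property of $L$ established in [4],[1].
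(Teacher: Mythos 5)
Your route is genuinely different from the paper's: the paper first reduces to the case $\chi(x_\alpha)=\chi(x_{-\alpha})=0$, $\chi(h_\alpha)\neq 0$, and then exhibits an explicit candidate basis $\mathfrak B$ of $U(L)/\mathfrak M_\chi$ of cardinality $p^{2m}$, built from elements $B_i+A_\beta$ engineered to commute with $x_\alpha$, ruling out a minimal dependence relation by repeated conjugation and subtraction with $g_\alpha=x_\alpha^{p-1}-x_{-\alpha}$ and the $U(\mathfrak{sl}_2)$-central element $w_\alpha$. You instead propose the standard reduced-enveloping-algebra scheme: upper bound $p^m$ from the Park's/baby-Verma property, lower bound from an $\mathfrak{sl}_2$-analysis plus a Kac--Weisfeiler/Premet globalisation. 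Unfortunately the proposal has genuine gaps beyond the one you flag. First, a local error: in the case the theorem is really about, $\chi(x_{\pm\alpha})=0$, so $x_{\pm\alpha}^p$ act as $\chi(x_{\pm\alpha})^p=0$ on $V$; the root vectors are \emph{nilpotent}, not bijective, and they do not cyclically permute the $p$ weight spaces of $h_\alpha$. (The conclusion $p\mid\dim V$ still holds, because $\chi|_{\mathfrak s}$ is regular semisimple and all baby Verma modules of $\mathfrak{sl}_2$ for such a character are simple of dimension $p$, but the mechanism you describe is the one for $\chi(x_{\pm\alpha})\neq 0$, which the paper disposes of separately by citation.)

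Second, and decisively, the globalisation you sketch cannot close as stated. Exponentiating the $\mathfrak s$-action moves $\chi$ inside its coadjoint orbit and cannot manufacture regularity at roots where $\chi$ is degenerate. The Kac--Weisfeiler alternative is worse: the hypothesis $\chi(h_\alpha)\neq 0$ for a \emph{single} base root does not force $\chi_s$ to be regular, so the centraliser $\mathfrak l=L^{\chi_s}$ may be strictly larger than $H$, and the Morita equivalence only scales dimensions by $p^{(\dim L-\dim\mathfrak l)/2}<p^m$ while the residual algebra $U_{\chi_n}(\mathfrak l)$ can have simple modules of non-maximal dimension (e.g.\ for $\mathfrak{sl}_3$ with $\chi$ dual to $\mathrm{diag}(a,a,b)$, $a\neq b$, one has $\chi(h_{\alpha_2})\neq 0$ yet the simples have dimensions $p^2\cdot d$ with $d\in\{1,\dots,p\}$). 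So your induction does not yield $\dim V=p^m$ for all simples under the stated hypothesis; it would only do so if $\chi_s$ were regular. This is not a defect you can patch by being more careful with the $SL_2$-symmetries --- it shows that any proof must exploit something beyond the single condition $\chi(h_\alpha)\neq 0$, which is exactly the burden the paper tries to carry with its explicit $\mathfrak B$-construction. There is also a circularity worry in invoking the Park's-Lie-algebra property of $L$ from [4],[1] for the upper bound, since establishing that property is the stated aim of the theorem.
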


\begin{proof}
If $\chi(x_\alpha)\neq 0$ or $\chi(x_{-\alpha})\neq 0$, then our assertion is evident from [1],[3],[4].So we may assume that $\chi(x_\alpha)= \chi(x_{-\alpha})= 0 $ but $\chi(h_\alpha)\neq 0$.\newline

Furthermore we may put $\alpha= \epsilon_1- \epsilon_2$ without loss of generality since all roots are conjugate under the Weyl group of $\Phi$.\newline

Since the case for $l=1$ is trivial, we may assume $l \geq 2$. For $i=1.2, \cdots$, we put $B_i:= b_{i1}h_{\epsilon_1- \epsilon_2}+\cdots + b_{il}h_{\epsilon_l- \epsilon_{l+1}} $ as in [3],[4] and we put $\frak B:= \{(B_1+ A_{\epsilon_1- \epsilon_2})^{i_1}\otimes (B_2+A_{\epsilon_2 - \epsilon_1})^{i_2}\otimes (\otimes_{j=3}^{l+1}(B_j+ A_{\epsilon_1- \epsilon_j})^{i_j})\otimes(\otimes_{j=3}^{l+1}(B_{l-1+j}+ A_{\epsilon_j- \epsilon_1})^{i_{l-1+j}})\otimes (\otimes_{j=3}^{l+1}(B_{2l-2+j}+ A_{\epsilon_2- \epsilon_j})^ {i_{2l-2+j}})\otimes (\otimes _{j=3}^{l+1}(B_{3l-3+j}+ A_{\epsilon_j- \epsilon_2})^{i_{3l-3+j}}\otimes \cdots \otimes (B_{2m-1}+ A_{\epsilon_l- \epsilon_{l+1}})^{i_{2m-1}}\otimes (B_{2m}+ A_{\epsilon_{l+1}-\epsilon_l})^{i_{2m}}\}$ for $0 \leq i_j \leq p-1$, \newline

where we set \newline

$A_{\epsilon_1- \epsilon_2}= g_\alpha =  g_{\epsilon_1- \epsilon_2}= x_{\epsilon_1- \epsilon_2}^{p-1}- x_{\epsilon_2- \epsilon_1},$ \newline

 $A_{\epsilon_2- \epsilon_1}= c_{\epsilon_2- \epsilon_1}+ (h_\alpha +1)^2 + 4^{-1}x_{-\alpha}x_\alpha,$\newline

$A_{\epsilon_1- \epsilon_3}= g_\alpha^2 (c_{\epsilon_1- \epsilon_3}+ x_{\epsilon_2- \epsilon_3}x_{\epsilon_3- \epsilon_2}\pm x_{\epsilon_1- \epsilon_3}x_{\epsilon_3- \epsilon_1}),$\newline

$ A_{\epsilon_3- \epsilon_1}= g_\alpha^3(c_{\epsilon_3- \epsilon_1}+ x_{\epsilon_3- \epsilon_2}x_{\epsilon_2- \epsilon_3}\pm x_{\epsilon_3- \epsilon_1}x_{\epsilon_1- \epsilon_3})$ or $ x_{\epsilon_3- \epsilon_4}(c_{\epsilon_3- \epsilon_1}+ x_{\epsilon_3- \epsilon_2}x_{\epsilon_2- \epsilon_3}\pm x_{\epsilon_3- \epsilon_1}x_{\epsilon_1- \epsilon_3}), $\newline

$A_{\epsilon_2- \epsilon_j}=  g_\alpha^4( c_{\epsilon_2- \epsilon_3}+ x_{\epsilon_2- \epsilon_3}x_{\epsilon_3- \epsilon_2}\pm x_{\epsilon_1- \epsilon_3}x_{\epsilon_3- \epsilon_1}) $(if $j= 3)$ or  $x_{\epsilon_4- \epsilon_j}(c_{\epsilon_2- \epsilon_j}+ x_{\epsilon_2- \epsilon_j}x_{\epsilon_j- \epsilon_2}\pm x_{\epsilon_1- \epsilon_j}x_{\epsilon_j- \epsilon_1})$,\newline

$A_{\epsilon_j- \epsilon_2}= g_\alpha^5(c_{\epsilon_3- \epsilon_2}+ x_{\epsilon_2-\epsilon_3}x_{\epsilon_3- \epsilon_2} \pm x_{\epsilon_1- \epsilon_3}x_{\epsilon_3- \epsilon_1})$ (if $j= 3)$ or $x_{\epsilon_j- \epsilon_4}(c_{\epsilon_j- \epsilon_2}+ x_{\epsilon_j- \epsilon_2}x_{\epsilon_2- \epsilon_j}\pm x_{\epsilon_j- \epsilon_1}x_{\epsilon_1- \epsilon_j}),$\newline

$A_{\epsilon_2- \epsilon_4}= x_{\epsilon_3- \epsilon_4}^2(c_{\epsilon_2- \epsilon_4}+ x_{\epsilon_2- \epsilon_4}x_{\epsilon_4- \epsilon_2}\pm x_{\epsilon_1- \epsilon_4}x_{\epsilon_4- \epsilon_1}),$\newline

$A_{\epsilon_4- \epsilon_2}= x_{\epsilon_4- \epsilon_3}(c_{\epsilon_4- \epsilon_2}+ x_{\epsilon_4- \epsilon_2}x_{\epsilon_2- \epsilon_4}\pm x_{\epsilon_4- \epsilon_1}x_{\epsilon_1- \epsilon_4}) ,$\newline

$A_{\epsilon_1- \epsilon_j}=x_{\epsilon_3- \epsilon_j}^2(c_{\epsilon_1- \epsilon_j}+ x_{\epsilon_1- \epsilon_j}x_{\epsilon_j- \epsilon_1} \pm x_{\epsilon_2- \epsilon_j}x_{\epsilon_j- \epsilon_2})   , $\newline

$ A_{\epsilon_j- \epsilon_1}    =x_{\epsilon_j- \epsilon_3}^2(c_{\epsilon_J- \epsilon_1}+ x_{\epsilon_1- \epsilon_j}x_{\epsilon_j- \epsilon_1} \pm x_{\epsilon_2- \epsilon_j}x_{\epsilon_j- \epsilon_2}) ,$\newline

$A_{\epsilon_i- \epsilon_j}= x_{\epsilon_i- \epsilon_j}^2$ or $x_{\epsilon_i- \epsilon_j}^3 $ for other roots $\epsilon_i- \epsilon_j$,\newline

where signs are chosen so that they may commute with $x_\alpha$ and $c_\beta$ are chosen so that $A_{\epsilon_2- \epsilon_1}$ and parentheses are invertible in $U(L)/\frak M_\chi$ for the kernel $\frak M_\chi$ in $U(L)$ of any given simple representation of $L$ with the character $\chi$.
\newline

We may see without difficulty that $\frak B$ is a linearly independent set in $U(L)$ by virtue of P-B-W theorem.\newline

We shall prove that a nontrivial linearly dependent  equation leads to absurdity. We assume first that we have a dependence equation which is of least degree with respect to $h_{\alpha_j}\in H$ and the number of whose highest  degree terms is also least.\newline

In case it is conjugated by $x_\alpha$, then there arises a nontrivial dependence equation of lower degree than the given one,which contradicts to our assumption.\newline

Otherwise we have to prove that \newline

(i)$x_{\epsilon_l- \epsilon_k}K+ K'\in \frak M_\chi$ with $l,k \neq 1,2$\newline

(ii) $g_\alpha K+ K'\in \frak M_\chi$ \newline

lead to a contradiction, where both $K$ and $K'$ commute with $x_{\pm \alpha}$  modulo $\frak M_\chi $. In particular $K$ commute with $g_\alpha$.
\newline

For the case (i), we may change it to the form $x_{\alpha}K+ K''\in \frak M_\chi$ for some $K''$ commuting with $x_\alpha= x_{\epsilon_1- \epsilon_2}$ modulo $\frak M_\chi$.\newline

So we have $x_\alpha^p K+ x_\alpha^{p-1}K''\equiv 0$, thus $x_\alpha^{p-1}K''\equiv 0$.\newline

Subtracting from this $x_{-\alpha}x_\alpha K+ x_{-\alpha}K''\equiv 0$, we get \newline

$-x_{-\alpha}x_\alpha K+ g_\alpha K'' \equiv 0$. Recall here that $g_\alpha$ is invertible and $w_\alpha$ belongs to the center of $U(\frak {sl}_2 (F))$ according to [7].\newline

So we get  $4^{-1}\{(h_\alpha +1)^2- w_\alpha\}K+ g_\alpha K''\equiv 0$, and hence\newline

 $(\ast) g_\alpha^{p-1} 4^{-1}\{(h_\alpha + 1)^2- w_\alpha \}K+ cK'' \equiv 0$\newline

 is obtained and from the start equation we have \newline

$(\ast \ast)cx_\alpha K+ c K''\equiv 0$, where $g_\alpha^p- c \equiv 0$.\newline

Subtracting $(\ast \ast)$ from $(\ast)$, we have $4^{-1}g_\alpha^{p-1}\{(h_\alpha+ 1)^2- w_\alpha\}K- cx_\alpha K \equiv 0$.\newline

Multiplying this equation by $g_\alpha^{1-p}$ to the right, we obtain $4^{-1}g_\alpha^{p-1}\{(h_\alpha+ 1)^2- w_\alpha\}g_\alpha^{1-p}K- cx_\alpha g_\alpha^{1-p}K \equiv 0$ \newline

We thus have $4^{-1}\{(h_\alpha+ 1- 2)^2- w_\alpha\}K- x_\alpha g_\alpha K \equiv 0$.

So it follows that $4^{-1}\{(h_\alpha -1)^2- w_\alpha\}K+ x_\alpha x_{-\alpha}K \equiv 0 $.\newline

Next multiplying $x_{-\alpha}^{p-1}$ to the right of this last equation, we obtain $\{(h_{\alpha}- 1)^2- w_\alpha\}K x_{-\alpha}^{p-1}\equiv 0$.
Now multiply $x_\alpha$ in turn consecutively to the left of this equation until it becomes of the form \newline

( a nonzero polynomial of degree $\geq 1$ with respect to $h_\alpha)K $\newline
$\in \frak M_\chi$. \newline

By making use of  conjugation and subtraction consecutively, we are led to a contradiction.$K \in \frak M_\chi$.
\newline

Finally for the case (ii),we consider $K+ g_\alpha^{-1}K' \in \frak M_\chi$.  So we have $x_\alpha K+ x_\alpha g_\alpha^{-1} K' \equiv 0$ modulo $\frak M_\chi$.

By analogy with the argument  as in the case (i), we obtain a contraiction $K \in \frak M_\chi$.

\end{proof}

\

\section{modular  $C_l$-type Lie algebra and its representation }                                            

\

\

We  note first that the root system of $C_l$-type Lie algebra over  $\mathbb C$ is just  $\Phi= \{\pm2 \epsilon_i,\pm (\epsilon_i \pm \epsilon_j)|1\leq i \neq j \leq l \geq 3\}$ with a base $\{\epsilon_1- \epsilon_2,\cdots ,\epsilon_{l-1}- \epsilon_l,2\epsilon_l \}, $ where $\epsilon_i$ and $\epsilon_j$ are linearly independent orthonormal unit vectors in $\mathbb R^l$. \newline

For a root $\alpha \in \Phi,$ we  also put $g_\alpha :=  x_\alpha^{p-1}- x_{- \alpha}$and $w_\alpha:= (h_\alpha+ 1)^2+ 4x_{-\alpha}x_\alpha$  as in section 2, where $[x_\alpha,x_{-\alpha}]= h_\alpha$.\newline

For an algebraically closed field $F$ of prime characteristic $p,$ the $C_l-$ type Lie algebra $L$ over $F$ is just the analogue over $F$ of the $C_l-$ type 
simple Lie algeba over $\mathbb C$.\newline

 In other words the $C_l-$ type Lie algebra over $F$ is isomorphic to the Chevalley Lie algebra of the form 
$\sum_{i=1}^{n}\mathbb{Z}c_i\otimes_\mathbb{Z}F,$ 
where $ n$= $dim_FL$ and $x_\alpha$= some $c_i$ for each $\alpha\in\Phi$ , $h_{\alpha}$= some $c_{j}$ with  $\alpha$  some base element  of  $\Phi$  for a Chevalley basis 
\{$c_{i}$\} of  the $C_{l}$ - type  Lie algebra over $\mathbb{C}$.\newline

We shall compute in this section the dimension of some simple modules of the $C_l$-type Lie algebra $L$ with a CSA  $H$ over an algebraically closed field $F$ of characteristic $p \geq 7$.\newline

Let $L$ be a  $C_l$-type simlpe Lie algebra over an algebraically closed field $F$ of  characteristic $p\geq 7$. Let $\chi$ be a character of any simple $L$-module with $\chi (x_\alpha)\neq 0$ for some $\alpha \in \Phi$,where $x_\alpha$ is an element in the Chevalley basis of  $L$ such that $F x_\alpha + Fh_\alpha+ Fx_{-\alpha}= \frak {sl}_2(F)$ with $[x_\alpha,x_{-\alpha}]= h_\alpha$.\newline

Then we have conjectured in [4]  that any simple $L$-module with character $\chi$ is of dimension  $p^m=p^{n-l\over 2}$,where $n= dim L= 2m +l $ for a CSA  $ H $ with $dim H =l$. \newline

In this section we intend to clarify this conjecture for modular $C_l$-type Lie algebra   $L$. \newline

\begin{prop}\label {thm3.1}

Let  $\alpha$  be any  root  in the root system  $\Phi$ of $L .$ If $\chi(x_\alpha)$ $\neq0,$ then $dim_F$$ \rho_\chi$$(U(L))$ = $p^{2m},$ where $ [Q(U(L)):Q(\mathfrak{Z})]$=$p^{2m}$=$p^{n-l}$ with $\mathfrak{Z}$ the center of $U(L)$  and  $Q$  denotes  the quotient algebra. \newline

So the  simple module corresponding to this representation has $p^m$ as its  dimension.\newline

\end{prop}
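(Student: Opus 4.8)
The plan is to reduce Proposition \ref{thm3.1} to the $\mathfrak{sl}_2$-situation already understood in the literature, exactly as in the cited references [1],[3],[4]. First I would observe that since $\chi(x_\alpha) \neq 0$ for a root $\alpha$ with $Fx_\alpha + Fh_\alpha + Fx_{-\alpha} = \mathfrak{sl}_2(F)$, the restriction of $\chi$ to this $\mathfrak{sl}_2$-triple is a \emph{regular} $p$-character; hence the element $x_\alpha$ acts invertibly on any simple $L$-module $V$ with character $\chi$, because $x_\alpha^p = x_\alpha^{[p]} + \chi(x_\alpha)^p\,\mathrm{id}_V$ and $x_\alpha^{[p]} = 0$ (nilpotency of root vectors), so $x_\alpha^p = \chi(x_\alpha)^p\,\mathrm{id}_V$ is a nonzero scalar. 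This invertibility is the engine that drives everything.

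Next I would count. The general principle (Kac--Weisfeiler, Premet) is that for a restricted Lie algebra $L$ of dimension $n$ with a torus $H$ of dimension $l$, the maximal dimension of a simple module is bounded by $p^{(n-l)/2} = p^m$, and this bound is attained precisely when $\chi$ is regular. Concretely, I would show $\dim_F \rho_\chi(U(L)) = p^{2m} = p^{n-l}$ by establishing the two inequalities. For the upper bound $\dim_F \rho_\chi(U(L)) \le p^{n-l}$: the quotient $U(L)/(U(L)\cap \mathfrak{M}_\chi\text{-radical})$, or rather the image algebra, is a quotient of the reduced enveloping algebra $U_\chi(L)$ which has dimension $p^n$; moreover $U_\chi(L)$ is an Azumaya algebra of degree $p^m$ over its center at regular characters, which forces the simple quotient to have dimension $p^{2m}$ over $F$ and the simple module to have dimension $p^m$. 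For the lower bound, I would exhibit $p^{2m}$ linearly independent operators in $\rho_\chi(U(L))$ built from the PBW monomials in the $x_{\pm\beta}$: using that $x_\alpha$ is invertible, one runs the standard $\mathfrak{sl}_2$-reduction pairing each positive root vector against a suitable partner (this is the role played by the elements $g_\alpha$, $w_\alpha$, and the $A_{\epsilon_i - \epsilon_j}$ in the Theorem of Section 2) to produce a free module structure of rank $p^{2m}$ over the image of the center.

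I would then invoke the quotient-algebra statement $[Q(U(L)) : Q(\mathfrak{Z})] = p^{2m}$, which says $U(L)$ has PI-degree $p^m$; combined with the fact that at the regular character $\chi$ the value $\rho_\chi$ does not drop the PI-degree (the central character corresponding to $\chi$ lies in the Azumaya locus), we conclude $\dim_F \rho_\chi(U(L)) = p^{2m}$. Finally, since $\rho_\chi(U(L))$ is then a full matrix algebra $M_{p^m}(F)$ acting irreducibly on $V$, we read off $\dim_F V = p^m$, which is the last sentence of the Proposition.

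The main obstacle I anticipate is the lower bound / non-degeneracy at $\chi$: one must be sure that the specialization at this particular central character genuinely realizes the generic PI-degree rather than collapsing. This is where the hypothesis $\chi(x_\alpha)\neq 0$ (regularity) is essential, and where the careful choice of the auxiliary elements $g_\alpha$ and the basis $\mathfrak{B}$ from Section 2 — chosen precisely so that the relevant parentheses are invertible in $U(L)/\mathfrak{M}_\chi$ — does the real work. I would handle it by the same conjugation-and-subtraction descent argument used in the proof of the Theorem in Section 2: assume a nontrivial dependence relation among the proposed $p^{2m}$ operators of minimal degree in the $h_{\alpha_j}$, conjugate by $x_\alpha$ (legitimate since $x_\alpha$ is invertible), and derive a lower-degree relation, contradicting minimality. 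Everything else — the PBW linear independence, the Azumaya/PI-degree machinery, the characteristic $p \ge 7$ hypothesis ensuring $4^{-1}$ and the $\mathfrak{sl}_2$-representation theory behave well — is standard and can be cited.
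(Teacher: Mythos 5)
Your overall strategy coincides with the paper's on the decisive point: the lower bound is to be obtained by exhibiting $p^{2m}$ linearly independent elements of $U(L)/\mathfrak{M}_\chi$ and protecting their independence by a conjugation-and-subtraction descent on a minimal dependence relation, with the invertibility of $x_\alpha$ (from $x_\alpha^p\equiv\chi(x_\alpha)^p\,\mathrm{id}$) as the engine; the upper bound comes from the PI-degree statement $[Q(U(L)):Q(\mathfrak{Z})]=p^{2m}$, which the paper likewise takes as known. To that extent the routes agree.

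Two concrete problems remain. First, your alternative route through the Azumaya locus is not available as stated: $\chi(x_\alpha)\neq 0$ for a single root vector does not make $\chi$ a regular character of $L$ (for $l\geq 2$ a linear form supported on one root vector has coadjoint orbit of dimension strictly less than $n-l$), so ``the central character lies in the Azumaya locus'' is essentially the assertion to be proved, not a citable fact, and the Kac--Weisfeiler/Premet divisibility theorem only yields the weaker lower bound $p^{(\dim G\cdot\chi)/2}$. Second, and more importantly, the step that would actually carry the proof --- ``run the standard $\mathfrak{sl}_2$-reduction as in Section 2'' --- is exactly where all of the content of the paper's argument lives, and it cannot be imported verbatim: the Theorem of Section 2 treats $A_l$ under the hypothesis $\chi(h_\alpha)\neq 0$, $\chi(x_{\pm\alpha})=0$ (hence the element $g_\alpha=x_\alpha^{p-1}-x_{-\alpha}$), whereas Proposition 3.1 treats $C_l$ with $\chi(x_\alpha)\neq 0$, uses $A_\alpha=x_\alpha$ itself as the invertible element, requires a fresh construction of the auxiliary elements $A_\beta$ adapted to the $C_l$ root system (with a separate case for $\alpha$ a long root $2\epsilon_1$, since long and short roots are not Weyl-conjugate), and a case analysis reducing every putative minimal relation to one of the forms $x_{\pm 2\epsilon_j}K+K'$, $x_{\pm\epsilon_j\pm\epsilon_k}K+K'\in\mathfrak{M}_\chi$ and refuting each. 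Without constructing those elements and performing that reduction, your text is a plan rather than a proof of the proposition.
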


\begin{proof}

Let $ \mathfrak {M}_\chi$ be the kernel of this irreducible representation,i.e., a certain (2-sided) maximal ideal of $U(L).$ \newline

(I) Assume first that $\alpha$ is a  short root; then we may put $\alpha$=$\epsilon_1-\epsilon_2$  without loss of generaity since all roots of a given length 
are conjugate under the Weyl group of the root system $\Phi$.\newline

 First we let \newline

 $ B_i$:=$b_{i1}$ $h_{\epsilon_{1}- \epsilon_{ 2}}$+ $b_{i2}$ $h_{\epsilon_{2}-\epsilon_{3}}$+$\cdots$+$b_{i,l-1}$ $h_{\epsilon_{l-1}-\epsilon_{ l }}$ + $b_{il}$ $h_{\epsilon_{l}}$  for $i=  1,2,$ $\cdots,2m,$ where ($b_{i1}$,$b_{i2}$   $\cdots,$$b_{il}$) $\in F^{l}$ are chosen so that any $ (l+1)-$$ B_i$'s  are linearly independent in $\mathbb{P}^l(F),$ the  $\frak B$   below  becomes an $F-$ linearly independent  set in $U(L)$ if necessary and $x_\alpha$$B_i$ $\not\equiv$$B_i$$x_\alpha$ for $\alpha$=$\epsilon_1-$$\epsilon_2.$ \newline

In  $U(L)$/$\mathfrak{M}_\chi$ we claim that we have a basis \newline

$\frak B$:= 
$\{(B_1 +A_{\epsilon_{1}-\epsilon_ { 2}})^{i_1}\otimes(B_2 +A_{-(\epsilon_{1}-\epsilon_ {2})})^{i_2}\otimes \cdots \otimes(B_{2l-2} +A_{-(\epsilon_{l-1}-\epsilon_{l})})^{i_{2l-2}}\otimes(B_{2l-1}+ A_{2\epsilon_{l}})^{i_{2l-1}}\otimes(B_{2l}+A_{-2\epsilon_{l}})^{i_{2l}}\otimes(\otimes_{j=2l+1}^{2m}(B_j+A_{\alpha_{j}})^{i_{j}}) | 0 \leq i_{j}\leq p-1 \}$, \newline

where we put\newline

$A_{\epsilon_{1}-\epsilon_{2}}$= $x_\alpha$=
$x_{\epsilon_{1}-\epsilon_{2}},$ \newline

$A_{\epsilon_{2}-\epsilon_{1}}$=$c_{-(\epsilon_{1}-\epsilon_{2})}$ +$(h_{\epsilon_{1}-\epsilon_{2}}+1)^2$ +4$x_\alpha$ $x_{-\alpha},$ \newline

$A_{{\epsilon_{2}}{\pm}\epsilon_{3}}$=$x_{\pm2\epsilon_{3}}$ $(c_{\epsilon_{2}\pm\epsilon_{3}}+ x_{\epsilon_{2}\pm\epsilon_{3}}x_{-(\epsilon_{2}\pm\epsilon_{3})}\pm x_{\epsilon_{1}\pm\epsilon_{3}}x_{-(\epsilon_{1}\pm\epsilon_{3})}),$ \newline

$A_{\epsilon_{1}+\epsilon_{2}}$=$x_{\epsilon_{1}-\epsilon_{2}}^2$ $(c_{\epsilon_{1}+\epsilon_{2}}+3x_{\epsilon_{1}+\epsilon_{2}}x_{-\epsilon_{1}-\epsilon_{2}}\pm 2x_{2\epsilon_{1}}x_{-2\epsilon_{1}}\pm 2 x_{2\epsilon_{2}}x_{-2\epsilon_{2}}),$\newline

 $A_{\epsilon_{2}\pm\epsilon_{k}}$=$x_{\epsilon_{3}\pm\epsilon_{k}}( c_{\epsilon_{2}\pm\epsilon_{k}}+x_{\epsilon_{2}\pm\epsilon_{k}}x_{-(\epsilon_{2}\pm\epsilon_{k})} \pm x_{\epsilon_{1}\pm\epsilon_{k}}x_{-(\epsilon_{1}\pm\epsilon_{k})} ),$ \newline

$A_{2\epsilon_{2}}$=$x_{2\epsilon_{3}}^2 (c_{2\epsilon_{2}}+2x_{2\epsilon_{2}}x_{-2\epsilon_{2}}\pm 3x_{\epsilon_{1}+ \epsilon_{2}}x_{-\epsilon_{1}-\epsilon_{2}} +2x_{2\epsilon_{1}}x_{-2\epsilon_{1}}),$ \newline

$A_{-2\epsilon_{1}}$=
$x_{-2\epsilon_{3}}^2 ( c_{-2\epsilon_{1}}+ 2x_{-2\epsilon_{1}}x_{2\epsilon_{1}}\pm3x_{-\epsilon_ {1}-\epsilon_{2}}x_{\epsilon_{1}+\epsilon_{2}}
\pm2x_{-2\epsilon_{2}}x_{2\epsilon_{2}}), $ \newline

$A_{-(\epsilon_{1}\pm\epsilon_{3})}$= $x_{-(\pm\epsilon_{3})}(c_{-(\epsilon_{2}\pm\epsilon_{3})}
+  x_{\epsilon_{2}\pm\epsilon_{3}}x_{-(\epsilon_{2}\pm\epsilon_{3})}\pm x_{\epsilon_{1}\pm\epsilon_{3}}x_{-(\epsilon_{1}\pm\epsilon_{3})}),$\newline

$A_{-(\epsilon_{1}\pm\epsilon_{k})}$= $x_{-(\epsilon_{3}\pm\epsilon_{k})}(c_{-(\epsilon_{1}\pm\epsilon_{k})}+ x_{\epsilon_{2}\pm\epsilon_{k}}x_{-(\epsilon_{2}\pm\epsilon_{k})}\pm x_{\epsilon_{1}\pm\epsilon_{k}}x_{-(\epsilon_{1}\pm\epsilon_{k})} ),$ \newline

$A_{2\epsilon_{l}}$= $x_{2\epsilon_{l}}^2, $ \newline

$A_{-2\epsilon_{l}}$= $x_{-2\epsilon_{l}}^2, $ 
\newline

with the sign chosen so that they commute with $x_{\alpha}$ and with $c_{\alpha}\in F$ 
chosen so that $A_{\epsilon_{2}-\epsilon_{1}}$ and parentheses are invertible.  For any other root $ \beta$ we put $A_{\beta}$= $x_{\beta}^2 $ or $x_{\beta}^3 $ if possible. Otherwise attach to these sorts the parentheses(        ) used for designating $A_{-\beta}$ so that  $A_\gamma  \forall \gamma \in \Phi$ may commute with $x_\alpha$.\newline

We shall prove that $\frak B$ is a basis in $U(L))$/$\mathfrak {M}_\chi$. \newline

By virtue of P-B-W theorem, it is not difficult to see that $\frak B$ is evidently a linearly independent set  over $F$ in $U(L)$. Furthermore $\forall$ $\beta$ $ \in\Phi$, $A_{\beta}\notin\mathfrak {M}_\chi$(see detailed proof below).\newline

We shall prove that a nontrivial linearly dependent equation leads to absurdity. We assume first that there is a dependence equation which is of least degree with respect to $h_{\alpha_{j}}\in H$ and the number of whose highest degree terms is also least. \newline

In case it is conjugated by $x_{\alpha}$, then there arises a nontrivial dependence equation of lower degree than the given one, which contradicts to our assumption.\newline

  Otherwise it reduces to one of the following forms:\newline

(i) $x_{2\epsilon_{j}}$$K$ + $K'$ $\in$ $\mathfrak{M}_\chi$ ,\newline

(ii) $x_{-2\epsilon_{j}}$$K$+ $K'$ $\in$ $\mathfrak{M}_\chi$ ,\newline

(iii)$x_{\epsilon_{j}+\epsilon_{k}}$ $K$ + $K'$$\in$ $\mathfrak{M}_\chi$,\newline

(iv)$x_{-\epsilon{j}-\epsilon_{k}}$$K$ + $K'$ $\in$ $\mathfrak{M}_\chi$,\newline

(v)$x_{\epsilon_{j}-\epsilon_{k}}$$K$ + $K'$ $\in$ $\mathfrak{M}_\chi$ ,\newline

where $K$, $K'$ commute with $x_{\alpha}$.\newline

For the case (i), we deduce successively \newline

$x_{\epsilon_{2}-\epsilon_{j}} x_{2\epsilon{j}}$$K$ + $x_{\epsilon_{2}-\epsilon{j}}$$K'$ $\in$ $\mathfrak{M}_\chi$

$\Rightarrow$ $x_{\epsilon_{2}+\epsilon_{j}}$$K$ + $x_{2\epsilon_{j}}$ $x_{\epsilon_{2}-\epsilon_{j}}$$K$ + $x_{\epsilon_{2}-\epsilon{j}}$$K'$
 $\in$ $\mathfrak{M}_\chi$ $\Rightarrow$($x_{\epsilon_{1}+\epsilon_{j}}$ or $x_{2\epsilon_{1}}$)$K$ + $x_{2\epsilon_{j}}$($x_{\epsilon_{1}-\epsilon_{j}}$ or $h_{\epsilon_{1}-\epsilon_{2}}$)$K$ +  ($x_{\epsilon_{1}-\epsilon_{j}}$ or $h_{\epsilon_{1}-\epsilon_{2}}$)$K'$ $\in$ $\mathfrak{M}_\chi$ \newline

by $adx_{\epsilon_{1}-\epsilon_{2}}$if $j$$\neq{1}$ or $j$=1 respectively, so that by successive $ adx_{\alpha}$ and rearrangement we get $x_{\epsilon_{1}\pm\epsilon_{j}}$$K+ K''$  $\in$ $\mathfrak{M}_\chi$ for some $K''$ commuting with $x_{\alpha}$ in view of  the start equation. So (i) reduces to (iii),(iv) or (v). \newline

Similarly as in (i)  and by adjoint operations , (ii) reduces to (iii),(iv) or (v). Also (iii),(iv) reduces to the form (v) putting $\epsilon_{j}$= -(-$\epsilon_{j}$), $\epsilon_{k}$= -(-$\epsilon_{k})$. \newline

Hence we have only to consider the case (v).
We consider\newline
 $x_{\epsilon_{k}-\epsilon_{2}}$ $x_{\epsilon_{j}-\epsilon_{k}}$ $K$+ $x_{\epsilon_{k}- \epsilon_{2}}$$K'$ $\in$ $\mathfrak{M}_\chi$ ,
so that ($x_{\epsilon_{j}-\epsilon_{2}}$+ $x_{\epsilon_{j}-\epsilon_{k}}$$x_{\epsilon_{k}-\epsilon_{2}}$)$K$ + $x_{\epsilon_{k}-\epsilon_{2}}$$K'$
$\in$ $\mathfrak{M}_\chi$ for $j,k$$\neq$1,2 . \newline

We thus have $x_{\epsilon_{j}-\epsilon_{2}}$$K$ + ($x_{\epsilon_{j}-\epsilon_{k}}$$x_{\epsilon_{k}-\epsilon_{2}}$ $K$ + $x_{\epsilon_{k}-\epsilon_{2}}$$K'$) $\in$ $\mathfrak{M}_\chi$, so that we may put this last (       )= another $K'$ alike as in the equation   (v).\newline

 Hence we need to show that  $x_{\epsilon_{j}-\epsilon_{2}}$$K$ + $K'$ $\in$ $\mathfrak{M}_\chi$ leads to absurdity.
We consider \newline

$x_{\epsilon_{2}-\epsilon_{j}}$$x_{\epsilon_{j}-\epsilon_{2}}$$K$ + $x_{\epsilon_{2}-\epsilon_{j}}$$K'$ $\in$ $\mathfrak{M}_\chi$
$\Rightarrow$ ($h_{\epsilon_{2}-\epsilon_{j}}+ x_{\epsilon_{j}-\epsilon_{2}}x_{\epsilon_{2}-\epsilon_{j}})K+  x_{\epsilon_{2}-\epsilon_{j}}K' \in \mathfrak{M}_\chi$ $\Rightarrow $    ($x_{\epsilon_{1}-\epsilon_{2}}$$\pm$$x_{\epsilon_{j}-\epsilon_{2}}$$x_{\epsilon_{1}-\epsilon_{j}}$)$K$ + $x_{\epsilon_{1}-\epsilon_{j}}$ $K'$$\in$ $\mathfrak{M}_\chi$  by $adx_{\epsilon_{1}-\epsilon_{2}}$ $\Rightarrow$ either $x_{\epsilon_{1}-\epsilon_{2}}$$K$ $\in$ $ \mathfrak{M}_\chi$ or  ( $x_{\epsilon_{1}-\epsilon_{2}}$ + $x_{\epsilon_{j}-\epsilon_{2}}$$x_{\epsilon_{1}-\epsilon_{j}}$)$K$+ $x_{\epsilon_{1}-\epsilon_{j}}$$K'$ $\in$ $\mathfrak{M}_\chi$ \newline

depending on [$x_{\epsilon_{j}-\epsilon_{2}}$, $x_{\epsilon_{1}-\epsilon_{j}}$]= +$x_{\epsilon_{1}-\epsilon_{2}}$  or    -$x_{\epsilon_{1}-\epsilon_{2}}$. The former case leads to $K$ $\in$ $\mathfrak{M}_\chi$, a contradiction. \newline

For the latter case

we consider \newline
$x_{\epsilon_{1}-\epsilon_{2}}$$K$ + ( $x_{\epsilon_{j}-\epsilon_{2}}$$x_{\epsilon_{1}-\epsilon_{j}}K$ + $x_{\epsilon_{1}-\epsilon_{j}}$$K'$)

$\in$ $\mathfrak{M}_\chi$.\newline

So we may put\newline

   $(\ast) x_{\epsilon_{1}-\epsilon_{2}}$$K$ + $K''$ $\in$$ \mathfrak{M}_\chi$,\newline

where $K''$=$x_{\epsilon_{j}-\epsilon_{2}}$$x_{\epsilon_{1}-\epsilon_{j}}$$K$+ $x_{\epsilon_{1}-\epsilon_{j}}$$K'$.
Thus $x_{\epsilon_{2}-\epsilon_{1}}$$x_{\epsilon_{1}-\epsilon_{2}}$$K$ + $x_{\epsilon_{2}-\epsilon_{1}}$$K''$ $\in$ $\mathfrak{M}_\chi$.
From $w_{\epsilon_1- \epsilon_2}:=(h_{\epsilon_1- \epsilon_2}+ 1)^2 +4 x_{\epsilon_2- \epsilon_1}x_{\epsilon_1- \epsilon_2}\in $ the center of  $U(\frak{sl}_2(F)), $ we get $ 4^{-1}\{w_{\epsilon_1- \epsilon_2}- (h+ 1)^2\}K+ x_{\epsilon_2- \epsilon_1}K'' \equiv 0 $  modulo $\frak M_\chi$.\newline

If $x_{\epsilon_2- \epsilon_1}^p\equiv c $ which is a constant,then \newline

$(\ast \ast)4^{-1}x_{\epsilon_2-\epsilon_1}^{p-1}\{w_{\epsilon_1- \epsilon_2}- (h_{\epsilon_1- \epsilon_2}+ 1)^2\}K+ cK''\equiv 0 $\newline

 is obtained.
\newline
From $(\ast),(\ast \ast)$, we have\newline

 $4^{-1}x_{\epsilon_2- \epsilon_1}^{p-1}\{w_{\epsilon_1- \epsilon_2}- (h_{\epsilon_1- \epsilon_2}+ 1)^2\}K- cx_{\epsilon_1- \epsilon_2}K\newline
\equiv 0$ modulo $\frak M_\chi$.\newline

Multiplying $x_{\epsilon_1- \epsilon_2}^{p-1}$ to this equation,we obtain \newline

 $(\ast \ast \ast)4^{-1}x_{\epsilon_1- \epsilon_2}^{p-1}x_{\epsilon_2- \epsilon_1}^{p-1}\{w_{\epsilon_1- \epsilon_2}- (h_{\epsilon_1- \epsilon_2}+ 1)^2\}K- cx_{\epsilon_1- \epsilon_2}^pK\equiv 0.  $\newline

By making use of $w_{\epsilon_1- \epsilon_2}$, we may deduce from $(\ast \ast \ast)$ an equation of the form \newline
( a polynomial of degree $\geq 1$ with respect to  $ h_{\epsilon_1- \epsilon_2})K- cx_{\epsilon_1- \epsilon_2}^pK\equiv 0.   $\newline

Finally if we use conjugation and subtraction consecutively,then we are led to a  contradiction $K\in \frak M_\chi.$\newline

(II)Assume next that $\alpha$ is a long root; then we may put $\alpha=2 \epsilon_{1}$ because all roots of the same length are conjugate under the Weyl group of $\Phi$ .\newline

 Similarly as in (I), we let  $B_{i}$:= the same as in (I) except that this time  $\alpha=2\epsilon_{1}$ instead of $\epsilon_{1}-\epsilon_{2}$ . 
\newline

We claim that we have a basis in $U(L)/\frak M_\chi$ such as\newline

 $\frak B$:= $\{(B_{1}+ A_{2\epsilon_{1}})^{i_{1}}\otimes(B_{2} + A_{-2\epsilon_{1}})^{i_{2}}\otimes(B_{3}+ A_{\epsilon_{1}-\epsilon_{2}})^{i_{3}}\otimes(B_{4}+A_{-(\epsilon_{1}-\epsilon_{2})})^{i_{4}}\otimes\cdots \otimes(B_{2l}+ A_{-(\epsilon_{l-1}-\epsilon_{l})})^{i_{2l}}\otimes(B_{2l+1}+ A_{2\epsilon_{l}})^{i_{2l+ 1}}\otimes(B_{2l+ 2}+ A_{-2\epsilon_{l}})^{i_{2l+ 2}}\otimes(\otimes_{j=2l+ 3}^{2m}(B_{j}+ A_{\alpha_{j}})^{i_{j}}; 0 \leq i_{j}\leq p-1\}$ , \newline

where we put \newline

$A_{2\epsilon_{1}}$= $x_{2\epsilon_{1}}$, \newline

$A_{-2\epsilon_{1}}$= $c_{-2\epsilon_{1}}$+ $(h_{2\epsilon_{1}}+ 1)^{2}+ 4x_{-2\epsilon_{1}}x_{2\epsilon_{1}} $, \newline

$A_{-\epsilon_{1}\pm\epsilon_{2}}$=$ x_{-\epsilon_{3}\pm\epsilon_{2}}$$( c_{-\epsilon_{1}\pm\epsilon_{2}}$ $\pm$$x_{-\epsilon_{1}\pm\epsilon_{2}}$$x_{\epsilon_{1}\mp\epsilon_{2}}$$\pm$$x_{\epsilon_{1}\pm\epsilon_{2}}$$x_{-\epsilon_{1}\mp\epsilon_{2}})  $,\newline

$A_{-\epsilon_{!}\pm \epsilon_ {j}}= x_{-\epsilon_{2}\pm\epsilon_{j}}
( c_{-\epsilon_{1}\pm\epsilon_{j}} 
+ x_{\pm{\epsilon_{j}}-\epsilon_{1}}x_{\epsilon_{1}\mp\epsilon_{j}}$
$\pm$$x_{\epsilon_{1}\pm\epsilon_{j}}x_{-\epsilon_{1}\mp\epsilon_{j}})$ ,\newline

 and for any other root $\beta$ we put  $A_{\beta}= x_{\beta}^2 $ or $x_{\beta}^3 $  if possible.\newline

 Otherwise attach to these sorts the parentheses (           ) used for designating $A_{-\beta}$. 
 Likewise as in case (I),  we shall prove that $\frak B$ is a basis in $U(L)$/$\mathfrak{M}_\chi$. \newline

By virtue of P-B-W theorem, it is not difficult to see that $\frak B$ is evidently a linearly independent set over $F$ in $U(L)$. Moreover $\forall\beta$$\in$$ \Phi$, $A_{\beta}\notin \mathfrak{M}_\chi$(see detailed proof below).\newline

We shall prove that a nontrivial linearly dependent equation leads to absurdity. We assume first that there is a dependence equation which is of least degree with respect to $h_{\alpha_{j}}$ $\in$$H$ and the number of  whose highest degree terms is also least.\newline

 If it is conjugated by  $x_{\alpha}$, then there arises a nontrivial dependence equation of least degree than the given one,which contravenes our assumption. \newline

 Otherwise it reduces to one of the following forms: \newline

(i) $x_{2\epsilon_{j}}K + K'\in \mathfrak{M}_\chi$ ,\newline

(ii)  $x_{-2\epsilon_{j}}K + K' \in \mathfrak{M}_\chi$,\newline

(iii)$ x_{\epsilon_{j}+ \epsilon_{k}}K+ K' \in \mathfrak{M}_\chi$ ,\newline

(iv) $ x_{-\epsilon_{j}-\epsilon_{k}}K+ K' \in \mathfrak{M}_\chi$,\newline

(v) $x_{\epsilon_{j}-\epsilon_{k}}K + K' \in \mathfrak{M}_\chi$ ,\newline

where $K$ and $K'$ commute with $x_{\alpha}= x_{2\epsilon_{1}}$.\newline

For the case (i) , we consider  a particular case  $j$=1 first; if we assume $x_{2\epsilon_{1}}K+ K' \in \mathfrak{M}_\chi$, then we are led to a contradiction according to the similar argument ($\ast$) as in (I). \newline

So we assume  $x_{2\epsilon_{j}}K + K' \in \mathfrak{M}_\chi$ with $j\geq2$. Now we have  
$x_{2\epsilon_{j}}K+ K' \in \mathfrak{M}_\chi$ $\Rightarrow$ $x_{-\epsilon_{1}-\epsilon_{j}}x_{2\epsilon_{j}}K + x_{-\epsilon_{1}-\epsilon_{j}}K'\in 
\mathfrak{M}_\chi \Rightarrow $ $x_{-\epsilon_{1}+ \epsilon_{j}}K + x_{2\epsilon_{j}}x_{-\epsilon_{1}-\epsilon_{j}}K + x_{-\epsilon_{1}-\epsilon_{j}}K' \in \mathfrak{M}_\chi \Rightarrow$ by $adx_{2\epsilon_{1}},   x_{\epsilon_{1}+ \epsilon_{j}}K + x_{2\epsilon_{j}}x_{\epsilon_{1}-\epsilon_{j}}K + x_{\epsilon_{1}-\epsilon_{j}}K' \in \mathfrak{M}_\chi$ is obtained. Hence (i) reduces to (iii).\newline

 Similarly (ii)reduces to (iii) or (iv) or (v). So we have only to consider (iii), (iv) , (v). However (iii), (iv), (v) reduce to $x_{2\epsilon_{1}}K + K'' \in \mathfrak{M}_\chi$ after all considering the situation as in (I). Similarly following the argument as in (I), we are led to a contradiction $K \in \mathfrak{M}_\chi$ .

\end{proof}

Now we  are ready to consider another nonzero character $\chi$ different from that of proposition 3.1.

\begin{prop}
Let $\chi $ be a character of any simple $L$-module with $\chi(h_\alpha )\neq $ 0  for  some $\alpha \in \Phi$, where $h_\alpha$ is an element in the Chevalley basis of  $L$ such that $F x_\alpha + Fh_\alpha+ Fx_{-\alpha}= \frak {sl}_2(F)$ with $[x_\alpha,x_{-\alpha}]= h_\alpha \in H$.\newline

 We then have that any simple $L$-module with character $\chi$ is of dimension  $p^m=p^{n-l\over 2}$,where $n= dim L= 2m +l $ for a CSA  H with $dim H =l$.
\end{prop}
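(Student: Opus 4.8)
The plan is to run, \emph{mutatis mutandis}, the proof of the $A_l$-type Theorem of Section~2 (the case with $\chi$ nonvanishing on a Cartan element), grafted onto the combinatorial set-up already built for Proposition~\ref{thm3.1}. First I would peel off the easy sub-case: if $\chi(x_\beta)\neq 0$ for some root $\beta\in\Phi$, then Proposition~\ref{thm3.1} already gives the conclusion, so from now on one may assume $\chi$ vanishes on every root vector; in particular $\chi(x_\alpha)=\chi(x_{-\alpha})=0$ while $\chi(h_\alpha)\neq 0$. Since $\Phi$ has exactly two root lengths and roots of a fixed length form a single Weyl orbit, it suffices to treat $\alpha=\epsilon_1-\epsilon_2$ (case I, short) and $\alpha=2\epsilon_1$ (case II, long), precisely the dichotomy of Proposition~\ref{thm3.1}. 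The elementary inputs driving both cases are: $x_{\pm\alpha}^{[p]}=0$, hence $x_{\pm\alpha}^{p}\equiv 0$ modulo $\mathfrak{M}_\chi$; the element $g_\alpha=x_\alpha^{p-1}-x_{-\alpha}$ is invertible modulo $\mathfrak{M}_\chi$, with $g_\alpha^{p}$ reducing to a nonzero scalar $c$; and $w_\alpha=(h_\alpha+1)^2+4x_{-\alpha}x_\alpha$ lies in the center of $U(\mathfrak{sl}_2(F))$ by~[7].

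Secondly I would write down a candidate basis $\mathfrak{B}$ of $U(L)/\mathfrak{M}_\chi$ of cardinality $p^{2m}$, in direct analogy with the one used in the Section~2 Theorem. One sets $A_\alpha=g_\alpha$ and $A_{-\alpha}=c_{-\alpha}+(h_\alpha+1)^2+4^{-1}x_{-\alpha}x_\alpha$, and for every remaining root $\beta$ one takes $A_\beta$ to be a suitable monomial in the Chevalley root vectors --- a power of $g_\alpha$, a square or cube $x_\gamma^{2}$ or $x_\gamma^{3}$, or such a monomial times a Casimir-type combination $c_\beta+\sum x_{\pm\delta}x_{\mp\delta}$ --- the internal signs being chosen so that each $A_\beta$ commutes with $x_\alpha$ modulo $\mathfrak{M}_\chi$ and the scalars $c_\beta$ chosen so that $A_{-\alpha}$ and every parenthesised factor are invertible modulo $\mathfrak{M}_\chi$. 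With the $B_j\in H$ chosen generically as in Proposition~\ref{thm3.1}, one puts $\mathfrak{B}=\{\,\bigotimes_j(B_j+A_{\beta_j})^{i_j}\;:\;0\leq i_j\leq p-1\,\}$; the P-B-W theorem at once shows that $\mathfrak{B}$ is $F$-linearly independent in $U(L)$ and that no $A_\beta$ lies in $\mathfrak{M}_\chi$.

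The heart of the matter is to prove that $\mathfrak{B}$ also spans $U(L)/\mathfrak{M}_\chi$. Since $\dim_F U(L)/\mathfrak{M}_\chi\leq p^{2m}$, which follows from $[Q(U(L)):Q(\mathfrak{Z})]=p^{2m}$ as recorded in Proposition~\ref{thm3.1}, this is equivalent to showing that $\mathfrak{B}$ stays linearly independent modulo $\mathfrak{M}_\chi$; then $\dim_F U(L)/\mathfrak{M}_\chi=p^{2m}$, and as $U(L)/\mathfrak{M}_\chi$ is a full matrix algebra over the algebraically closed field $F$ the simple module has dimension $p^m$. I would argue by contradiction: among the nontrivial $F$-linear combinations of members of $\mathfrak{B}$ that lie in $\mathfrak{M}_\chi$, choose one of least degree in the $h_{\alpha_j}$ and, among those, with the fewest top-degree terms. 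If conjugation by $x_\alpha$ does not fix it up to lower-degree terms, conjugating produces a strictly smaller such relation, a contradiction; otherwise the relation collapses to one of the shapes $x_{\pm 2\epsilon_j}K+K'\in\mathfrak{M}_\chi$, $x_{\pm(\epsilon_j+\epsilon_k)}K+K'\in\mathfrak{M}_\chi$ or $x_{\epsilon_j-\epsilon_k}K+K'\in\mathfrak{M}_\chi$, with $K,K'$ commuting with $x_\alpha$ modulo $\mathfrak{M}_\chi$. Exactly as in Proposition~\ref{thm3.1}, repeated adjoint action by appropriate root vectors funnels each of these into the single shape $x_\alpha K+K''\in\mathfrak{M}_\chi$. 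At that point I run the $\mathfrak{sl}_2$ computation of the Section~2 Theorem: multiplying by $x_\alpha^{p-1}$ (using $x_\alpha^{p}\equiv 0$) and separately by $x_{-\alpha}$, then subtracting, gives $-x_{-\alpha}x_\alpha K+g_\alpha K''\equiv 0$; substituting $4x_{-\alpha}x_\alpha=(h_\alpha+1)^2-w_\alpha$ and using the invertibility of $g_\alpha$ together with $g_\alpha^{p}\equiv c$ extracts an equation of the form ( a polynomial in $h_\alpha$ of degree $\geq 1$ )$K\in\mathfrak{M}_\chi$; conjugating and subtracting repeatedly lowers that degree, contradicting minimality unless $K\in\mathfrak{M}_\chi$, which closes the case and the proof.

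I expect the real work to be bookkeeping rather than a new idea. One has to pin down, for the $C_l$ root system and for \emph{both} the short-root and the long-root $\mathfrak{sl}_2$-triples, an explicit family $\{A_\beta\}$ that is simultaneously (a) commuting with $x_\alpha$ modulo $\mathfrak{M}_\chi$ after the sign choices, (b) invertible modulo $\mathfrak{M}_\chi$ after the choices of $c_\beta$, and (c) equipped with pairwise distinct P-B-W leading terms, so that $\mathfrak{B}$ really is a basis; and one must re-check that the adjoint-operation reductions of the canonical shapes down to $x_\alpha K+K''$ still go through in case~II, where the local configuration of roots around $2\epsilon_1$ differs from that around $\epsilon_1-\epsilon_2$. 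The $\mathfrak{sl}_2$-theoretic endgame, by contrast, is word-for-word the one in the Section~2 Theorem and contributes nothing new.
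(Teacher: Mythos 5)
Your proposal follows essentially the same route as the paper's own proof: reduce to the case where the images of $x_{\pm\alpha}$ vanish modulo $\mathfrak{M}_\chi$ via Proposition~3.1 and the earlier results, build the same $g_\alpha$-based candidate basis $\mathfrak{B}$ of cardinality $p^{2m}$, run the same minimal-degree/conjugation argument to funnel any dependence relation into the shape $x_\alpha K+K''\in\mathfrak{M}_\chi$, and close with the identical $\mathfrak{sl}_2$ endgame using the centrality of $w_\alpha$ and the invertibility of $g_\alpha$. The only cosmetic difference is that the paper lists $g_{\epsilon_1-\epsilon_2}K+K'\in\mathfrak{M}_\chi$ as a separate canonical shape (disposed of by rewriting it as $K+g_\alpha^{-1}K'$ and multiplying by $x_\alpha$), which your framework absorbs into the same computation.
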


\begin{proof}

Let $ \mathfrak {M}_\chi$ be the kernel of this irreducible representation,i.e., a certain (2-sided) maximal ideal of $U(L).$ \newline
If $x_{\epsilon_1- \epsilon_2}\not \equiv 0$ or $x_{\epsilon_2- \epsilon_1}\not \equiv 0$, then our assertion is evident from proposition 4.1 in [3].\newline

So we may let $x_{\epsilon_1- \epsilon_2}\equiv x_{\epsilon_2- \epsilon_1}\equiv 0$ modulo $\frak M_\chi$.\newline

(I) Assume first that $\alpha$ is a  short root; then we may put $\alpha$=$\epsilon_1-\epsilon_2$  without loss of generaity since all roots of a given length 
are conjugate under the Weyl group of the root system $\Phi$.\newline

 First we let  $B_i$:=$b_{i1}$ $h_{\epsilon_{1}- \epsilon_{ 2}}$+ $b_{i2}$ $h_{\epsilon_{2}-\epsilon_{3}}$+$\cdots$+$b_{i,l-1}$ $h_{\epsilon_{l-1}-\epsilon_{ l }}$ + $b_{il}$ $h_{\epsilon_{2l}}$  for $i=  1,2,$ $\cdots,2m,$ where ($b_{i1}$,$b_{i2}$   $\cdots,$$b_{il}$) $\in F^{l}$ are chosen so that any $ (l+1)-$$ B_i$'s  are linearly independent in $\mathbb{P}^l(F),$ the  $\frak B$   below  becomes an $F-$ linearly independent  set in $U(L)$ if necessary and $x_\alpha$$B_i$ $\not\equiv$$B_i$$x_\alpha$ for $\alpha$=$\epsilon_1-$$\epsilon_2.$ \newline

In  $U(L)$/$\mathfrak{M}_\chi$ we claim that we have a basis \newline

$\frak B$:= 
$\{(B_1 +A_{\epsilon_{1}-\epsilon_ { 2}})^{i_1}\otimes(B_2 +A_{-(\epsilon_{1}-\epsilon_ {2})})^{i_2}\otimes \cdots \otimes(B_{2l-2} +A_{-(\epsilon_{l-1}-\epsilon_{l})})^{i_{2l-2}}\otimes(B_{2l-1}+ A_{2\epsilon_{l}})^{i_{2l-1}}\otimes(B_{2l}+A_{-2\epsilon_{l}})^{i_{2l}}\otimes(\otimes_{j=2l+1}^{2m}(B_j+A_{\alpha_{j}})^{i_{j}}) | 0 \leq i_{j}\leq p-1 \}$, \newline

where we put\newline

$A_{\epsilon_{1}-\epsilon_{2}}= g_\alpha=
g_{\epsilon_{1}-\epsilon_{2}},$ \newline

$A_{\epsilon_{2}-\epsilon_{1}}$=$c_{-(\epsilon_{1}-\epsilon_{2})}$ +$(h_{\epsilon_{1}-\epsilon_{2}}+1)^2$ +4$x_{-\alpha}$ $x_\alpha,$ \newline

$A_{{\epsilon_{2}}{\pm}\epsilon_{3}}$=$x_{\pm2\epsilon_{3}}$ $(c_{\epsilon_{2}\pm\epsilon_{3}}+ x_{\epsilon_{2}\pm\epsilon_{3}}x_{-(\epsilon_{2}\pm\epsilon_{3})}\pm x_{\epsilon_{1}\pm\epsilon_{3}}x_{-(\epsilon_{1}\pm\epsilon_{3})}),$ \newline

$A_{\epsilon_{1}+\epsilon_{2}}$=$g_{\epsilon_1-\epsilon_2}^2$ $(c_{\epsilon_{1}+\epsilon_{2}}+2^{-1}x_{\epsilon_{1}+\epsilon_{2}}x_{-\epsilon_{1}-\epsilon_{2}}\pm 3^{-1}x_{2\epsilon_{1}}x_{-2\epsilon_{1}}\pm 3^{-1} x_{2\epsilon_{2}}x_{-2\epsilon_{2}}),$ \newline

$A_{-\epsilon_1- \epsilon_2}= g_{\epsilon_1- \epsilon_2}^3((c_{-\epsilon_{1}-\epsilon_{2}}+2^{-1}x_{\epsilon_{1}+\epsilon_{2}}x_{-\epsilon_{1}-\epsilon_{2}}\pm 3^{-1}x_{2\epsilon_{1}}x_{-2\epsilon_{1}}\pm 3^{-1} x_{2\epsilon_{2}}x_{-2\epsilon_{2}}),$ \newline

$A_{\epsilon_{2}\pm\epsilon_{k}}$=$x_{\epsilon_{3}\pm\epsilon_{k}}( c_{\epsilon_{2}\pm\epsilon_{k}}+x_{\epsilon_{2}\pm\epsilon_{k}}x_{-(\epsilon_{2}\pm\epsilon_{k})} \pm x_{\epsilon_{1}\pm\epsilon_{k}}x_{-(\epsilon_{1}\pm\epsilon_{k})} ),$ \newline

$A_{2\epsilon_{2}}$=$g_{\epsilon_1- \epsilon_2}^6 (c_{2\epsilon_{2}}+3^{-1}x_{2\epsilon_{2}}x_{-2\epsilon_{2}}\pm 2^{-1}x_{\epsilon_{1}+ \epsilon_{2}}x_{-\epsilon_{1}-\epsilon_{2}} +3^{-1}x_{2\epsilon_{1}}x_{-2\epsilon_{1}}),$ \newline

$A_{-2\epsilon_2}= g_\alpha A_{\epsilon_2- \epsilon_1}(c_{-2\epsilon_2}+ 3^{-1}x_{2\epsilon_2}x_{-2\epsilon_2}\pm 2^{-1}x_{\epsilon_1+ \epsilon_2}x_{-\epsilon_1- \epsilon_2}+ 3^{-1}x_{2\epsilon_1}x_{-2\epsilon_1}) $\newline

$A_{2\epsilon_1}= g_{\epsilon_1- \epsilon_2}^4(c_{2\epsilon_1}+ 3^{-1}x_{-2\epsilon_1}x_{2\epsilon_1}\pm 2^{-1}x_{-\epsilon_1- \epsilon_2}x_{\epsilon_1+ \epsilon_2}\pm 3^{-1}x_{-2\epsilon_2}x_{2\epsilon_2})$\newline

$A_{-2\epsilon_1}$=$g_{\epsilon_1- \epsilon_2}^5 ( c_{-2\epsilon_{1}}+ 3^{-1}x_{-2\epsilon_{1}}x_{2\epsilon_{1}}\pm2^{-1}x_{-\epsilon_ {1}-\epsilon_{2}}x_{\epsilon_1+\epsilon_2}
\pm3^{-1}x_{-2\epsilon_2}x_{2\epsilon_2}), $ \newline

$A_{-(\epsilon_{1}\pm\epsilon_{3})}$= $x_{-(\pm\epsilon_{3})}(c_{-(\epsilon_{2}\pm\epsilon_3)}
+  x_{\epsilon_{2}\pm\epsilon_{3}}x_{-(\epsilon_{2}\pm\epsilon_{3})}\pm x_{\epsilon_{1}\pm\epsilon_{3}}x_{-(\epsilon_{1}\pm\epsilon_{3})}),$
\newline

$A_{-(\epsilon_{1}\pm\epsilon_{k})}$= $x_{-(\epsilon_{3}\pm\epsilon_{k})}(c_{-(\epsilon_{1}\pm\epsilon_{k})}+ x_{\epsilon_{2}\pm\epsilon_{k}}x_{-(\epsilon_{2}\pm\epsilon_{k})}\pm x_{\epsilon_{1}\pm\epsilon_{k}}x_{-(\epsilon_{1}\pm\epsilon_{k})} ),$ \newline

$A_{2\epsilon_{l}}$= $x_{2\epsilon_{l}}^2$(if $l\neq 1,2) , $ \newline

$A_{-2\epsilon_{l}}$= $x_{-2\epsilon_{l}}^2, $ 
\newline

with the sign chosen so that they commute with $x_{\alpha}$ and with $c_{\alpha}\in F$ 
chosen so that $A_{\epsilon_{2}-\epsilon_{1}}$ and parentheses are invertible.  For any other root $ \beta$ we put $A_{\beta}$= $x_{\beta}^2 $ or $x_{\beta}^3 $ if possible. \newline

Otherwise attach to these sorts the parentheses(        ) used for designating $A_{-\beta}$ so that  $A_\gamma  \forall \gamma \in \Phi$ may commute with $x_\alpha$.\newline

We shall prove that $\frak B$ is a basis in $U(L))$/$\mathfrak {M}_\chi$. By virtue of P-B-W theorem, it is not difficult to see that $\frak B$ is evidently a linearly independent set  over $F$ in $U(L)$. Furthermore $\forall$ $\beta$ $ \in\Phi$, $A_{\beta}\notin\mathfrak {M}_\chi$(see detailed proof below).\newline

We shall prove that a nontrivial linearly dependent equation leads to absurdity.\newline

We assume first that there is a dependence equation which is of least degree with respect to $h_{\alpha_{j}}\in H$ and the number of whose highest degree terms is also least. \newline

In case it is conjugated by $x_{\alpha}$, then there arises a nontrivial dependence equation of lower degree than the given one, which contradicts our assumption.\newline

 Otherwise it reduces to one of the following forms:\newline

(i)$x_{\pm 2\epsilon_j}K+ K' \in \frak M_\chi, $\newline

(ii)$x_{\pm \epsilon_j \pm \epsilon_k}K+ K' \in \frak M_\chi,$\newline

(iii)$g_{\epsilon_1- \epsilon_2}K+ K' \in \frak M_\chi,$\newline

where $K,K'$ commute with $x_\alpha$ and $x_{-\alpha}$ modulo $\frak M_\chi$.\newline

By making use of proofs of proposition4.1 in [3] and theorem2.1 in [5],we may reduce (i)  and (ii) to the equation of the form\newline

 $x_{\epsilon_1- \epsilon_2}K+ K'\in \frak M_\chi,$\newline

where $K$ commute with $x_{\pm (\epsilon_1- \epsilon_2)}$ and $K'$ commute with $x_{\epsilon_1- \epsilon_2}$ modulo $\frak M_\chi$.\newline

We have $x_{\epsilon_1- \epsilon_2}^p K+ x_{\epsilon_1- \epsilon_2}^{p-1}K' \equiv 0 $, so we get $x_{\epsilon_1- \epsilon_2}^{p-1}K'\equiv 0$.\newline

Subtracting $x_{\epsilon_2- \epsilon_1}x_{\epsilon_1- \epsilon_2}K+ x_{\epsilon_2- \epsilon_1}K'\equiv 0$ from  this equation, we obtain $ -x_{\epsilon_2- \epsilon_1}x_{\epsilon_1- \epsilon_2}K+ g_\alpha K'\equiv 0$.
We should remember that $g_\alpha$ is invertible in $U(L)/\frak M_\chi$ by virtue of  [8].\newline

By the way we use $w_\alpha := (h_\alpha+ 1)^2+ 4x_{-\alpha}x_\alpha \in $ the center of $U(\frak {sl}_2(F))$.Hence we have $-4^{-1}\{w_\alpha- (h_\alpha + 1)^2\}K+ g_\alpha K'\equiv 0$. So we obtain

$4^{-1}g_\alpha^{p-1}\{(h_\alpha+ 1)^2- w_\alpha\}+ cK'\equiv 0 \cdots (\ast)$ \newline

and from the start equation we get \newline

$cx_\alpha K+ cK'\equiv 0 \cdots (\ast \ast)$.\newline

Subtracting $(\ast \ast)$ from $(\ast)$, we get $4^{-1}g_\alpha^{-1}\{(h_\alpha+ 1)^2- w_\alpha\}K- cx_\alpha K \equiv 0$.
Multiplying this equation by $g_\alpha^{1-p}$ to the right, we have\newline

 $4^{-1}g_\alpha^{p-1}\{h_\alpha+1)^2- w_\alpha\}g_\alpha^{1-p}K- cx_\alpha g_\alpha^{1-p}K \equiv 4^{-1}g_\alpha^{p-1} \{(h_\alpha+ 1)^2- w_\alpha\}g_\alpha^{1-p}K+ x_\alpha x_{-\alpha}K \equiv 0$.\newline

Conjugation of the brace of this equation $(p-1)$- times by $g_\alpha$ gives rise to $4^{-1}\{(h_\alpha- 1)^2- w_\alpha\}K+ x_\alpha x_{-\alpha}K\equiv 0$.
Next mutiplying $x_{-\alpha}^{p-1}$ to the right of the last equation, we obtain \newline

$\{(h_\alpha -1)^2- w_\alpha\}K x_{-\alpha}^{p-1}\equiv 0$ modulo $\frak M_\chi$. \newline

Now we multiply $x_\alpha$  to the left of this equaion consecutively until it becomes of the form \newline

(a nonzero polynomial of degree $\geq 1$ with respect to $h_\alpha)K \newline
\equiv 0$ modulo $ \frak M_\chi$.\newline

If we make use of conjugation and subtraction consecutively, then we arrive at a contradiction $K\equiv 0$.\newline

Next for the case (iii),we change it to the form (iii)$'K+ g_\alpha^{-1}K'\in \frak M_\chi$.\newline

We thus have an equation\newline

 $x_{\epsilon_1- \epsilon_2}K+ x_{\epsilon_1- \epsilon_2}g_{\epsilon_1- \epsilon_2}^{-1}K'\equiv 0$ modulo $\frak M_\chi$.
According to the above argument, we are also  led to a contradiction $K\in \frak M_\chi$.

\end{proof}

\

\section{concluding remark}

\

\

We have considered up to now the relationship of  $C_l$ and $A_l$-type modular Lie algebras with Hypo- Lie algebra.\newline

So we may recapitulate the arguments in this paper as follows.

\begin{thm}

Let F be any algebraically closed field of characteristic $p\geq 7$.Let $L$ be any $C_l$ or $A_l$-type modular Lie algebra oveer $F$.We then assert that $L$ is a Park's Lie algebra, and so a Hypo- Lie algebra.

\end{thm}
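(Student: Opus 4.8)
The plan is to derive the theorem as a synthesis of the dimension computations carried out in Sections 2 and 3 with the results already cited from [1], [3] and [4]. First I would recall from [4] the definitions involved: a restricted Lie algebra $L$ over $F$ has an \emph{angelic way}, hence is a \emph{Park's Lie algebra}, when all but finitely many simple $L$-modules admit a Lee's basis --- equivalently, when every simple $L$-module whose $p$-character $\chi$ lies outside a finite subset of $L^{*}$ has dimension $p^{(\dim_F L - \dim_F H)/2}$ for a CSA $H$ of $L$ --- and that, by the implication recorded in [4], every Park's Lie algebra is a Hypo-Lie algebra. With these conventions the theorem is reduced to establishing the dimension $p^{m}=p^{(n-l)/2}$ for the simple modules attached to every nonzero $\chi\in L^{*}$.

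The second step is a case analysis on a nonzero character $\chi$. As $L$ is $F$-spanned by the Chevalley root vectors $x_{\beta}$, $\beta\in\Phi$, together with the coroots $h_{\alpha_i}$ of the base of $\Phi$, and the latter form a basis of $H$, a nonzero $\chi$ must satisfy one of: (a) $\chi(x_{\beta})\neq 0$ for some $\beta\in\Phi$, or (b) $\chi(x_{\beta})=0$ for all $\beta\in\Phi$, so that $\chi|_{H}\neq 0$. In case (a) I invoke [1], [3], [4] when $L$ is of type $A_l$ (the $l=1$ case being the classical $\mathfrak{sl}_2(F)$), and Proposition 3.1 when $L$ is of type $C_l$, whose proof treats a short root $\alpha$ and a long root $\alpha$ separately after normalizing $\alpha$ to $\epsilon_1-\epsilon_2$ or $2\epsilon_1$ by Weyl-conjugacy of roots of a fixed length. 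In case (b), since the coroots of the base roots (for $A_l$) and of the short roots (for $C_l$) span $H$, there is a root $\alpha$ --- a base root for $A_l$, a short root for $C_l$ --- with $\chi(h_{\alpha})\neq 0$; for type $A_l$ this is exactly the hypothesis of Theorem 2.2, and for type $C_l$, after conjugating $\alpha$ to $\epsilon_1-\epsilon_2$, it is the hypothesis of Proposition 3.2. These statements already perform the further subdivision according to whether $\chi$ also vanishes on the ambient $\mathfrak{sl}_2$-triple, so no additional argument is needed, and in every case the simple module with character $\chi$ has dimension $p^{m}$.

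The third step packages the conclusion. The only character not covered is $\chi=0$, the restricted case, which accounts for only finitely many simple $L$-modules; hence all but finitely many simple $L$-modules admit a Lee's basis, so $L$ has an angelic way, so $L$ is a Park's Lie algebra and therefore a Hypo-Lie algebra. Since $l$ was arbitrary (with $l\geq 3$ for type $C_l$), this establishes the assertion for every $C_l$- or $A_l$-type modular Lie algebra over $F$, which is Kim's conjecture for these two types.

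The step I expect to be the main obstacle is not this bookkeeping but verifying that the case list above genuinely matches the precise hypotheses of Theorem 2.2 and Propositions 3.1 and 3.2 --- in particular that reducing a root of given length to $\epsilon_1-\epsilon_2$ (resp. $2\epsilon_1$) by the Weyl group leaves the dimension of the simple module unchanged, and that the reductions of the auxiliary equations (i)--(v) in those proofs to the distinguished $x_{\alpha}$-equation are valid uniformly in $l$. A secondary point is the boundary character $\chi=0$: one should make explicit that the definition of a Park's Lie algebra tolerates the finitely many restricted simple modules, whose dimensions are in general strictly smaller than $p^{m}$ (for instance the trivial module), so that they do not obstruct the angelic way.
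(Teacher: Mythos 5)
Your proposal is correct and follows essentially the same route as the paper: the paper's proof of this theorem is literally the single sentence that Theorem 2.2, Proposition 3.1 and Proposition 3.2 combine to give the assertion, and your case analysis on a nonzero character $\chi$ (the root-vector case versus the Cartan case, with the restricted case $\chi=0$ absorbed into the finitely many permitted exceptions) is exactly the implicit content of that combination. Your observation that in the Cartan case one may always choose the root with $\chi(h_\alpha)\neq 0$ to be short for type $C_l$ (so that Proposition 3.2, whose proof only treats the short-root normalization $\alpha=\epsilon_1-\epsilon_2$, actually applies) makes explicit a point the paper leaves tacit.
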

\begin{proof}

Combining theorem 2.2 , proposition 3.1 and proposition3.2  gives rise to our assertion.

\end{proof}

We are looking forward to claiming that any $B_l$ and $D_l$-type modular Lie algebras also become a Hypo Lie algebra over any algebraically closed field of characteristic $p\geq 7$.\newline

The prime number 7 is important since  all modular  Lie algebras  of classical type are simple  for $p\geq 7$  if we disregard their centers.\newline

Furthermore all modular simple Lie algebras are known to be either of classical type or of Cartan type over any algebraically closed field of characteristic $p\geq 7$.

\

\

\bibliographystyle{amsalpha}

\end{document}